\theoremstyle{plain}
\newtheorem{theorem}{Theorem}[section]
\newtheorem{prop}[theorem]{Proposition}
\newtheorem{cor}{Corollary}[theorem]
\theoremstyle{definition}
\newtheorem{remark}{Remark}[section]
\newtheorem{example}{Example}[theorem]
\begin{document}

\title[On the norm attainment set of a bounded linear operator]{On the norm attainment set of a bounded linear operator}
\author[Debmalya Sain]{Debmalya Sain}

\newcommand{\acr}{\newline\indent}

\address{\llap{\,}Department of Mathematics\acr
                              Jadavpur University\acr
                              Kolkata 700032\acr
                              West Bengal\acr
                              INDIA}
\email{saindebmalya@gmail.com}

\thanks{The author lovingly appreciates the enormous efforts on part of his mother, Mrs.~Asa Das(Sain), towards rightly shaping his philosophy. He feels extremely elated to acknowledge the incessant inspirations of Ms. Tilottama Majumder. He is grateful to Prof. Kallol Paul for his affectionate guidance. He would also like to thank Mr. Aniket Bhanja for pointing out a typo.} 

\subjclass[2010]{Primary 46B20, Secondary 46B99}
\keywords{linear operator ; norm attainment ; Birkhoff-James orthogonality ; smooth Banach space}

\begin{abstract}
In this paper we explore the properties of a bounded linear operator defined on a Banach space, in light of operator norm attainment. Using Birkhoff-James orthogonality techniques, we give a necessary condition for a bounded linear operator attaining norm at a particular point of the unit sphere. We prove a number of corollaries to establish the importance of our study. As part of our exploration, we also obtain a characterization of smooth Banach spaces in terms of operator norm attainment and Birkhoff-James orthogonality. Restricting our attention to $ l_{p}^{2} (p \in \mathbb{N}\setminus \{ 1 \})$ spaces, we obtain an upper bound for the number of points at which any linear operator, which is not a scalar multiple of an isometry, may attain norm. 

\end{abstract}

\maketitle

\section{Introduction.}
The principal aim of this paper is to explore the structure and properties of the norm attainment set of a bounded linear operator on a Banach space. Regarding the existential question of the norm attainment of a bounded linear operator, it is well known that a compact linear operator on a reflexive Banach space must attain norm at some point of the unit sphere. Furthermore, if the Banach space is strictly convex then any nonzero continuous linear functional defined on the space may attain maximum at at most one point of the unit sphere. However, to the best of our knowledge, no analogous result is available in the literature for bounded linear operators on Banach spaces. On the other hand, information regarding many important properties of a bounded linear operator, including smoothness of the operator \cite{P}, can be deduced from the norm attainment set of the operator. In the present paper, using Birkhoff-James orthogonality techniques, we strive to obtain a computable necessary condition for a  bounded linear operator on a Banach space to attain norm at a particular point of the unit sphere.\\   

Let us first fix our notations and terminologies. In this paper, letters $ \mathbb{X}, \mathbb{Y} $ denote Banach spaces. Throughout the paper, we consider the Banach spaces to be real. Let $ B_\mathbb{X}=\{x \in \mathbb{X} : \|x\| \leq 1\} $ and $ S_\mathbb{X}=\{x \in \mathbb{X} : \|x\|=1\} $ be the unit ball and the unit sphere of the Banach space $ \mathbb{X}$ respectively. Let $ \mathbb{L}(\mathbb{X},\mathbb{Y}) ( \mathbb{K}(\mathbb{X},\mathbb{Y}))$ denote the set of all bounded (compact) linear operators from the Banach space $ \mathbb{X} $ to another Banach space $ \mathbb{Y}. $ We write $ \mathbb{L}(\mathbb{X},\mathbb{Y}) = \mathbb{L}(\mathbb{X}) $ and $ \mathbb{K}(\mathbb{X},\mathbb{Y}) = \mathbb{K}(\mathbb{X}), $ if $ \mathbb{X} = \mathbb{Y}. $ \\

\noindent For any two elements $ x,y \in \mathbb{X}, $ $ x $ is said to be orthogonal to $ y $ in the sense of Birkhoff-James, written as $ x \perp_B y, $ if 
\[ \|x\| \leq \|x+\lambda y\| ~ \mbox{for all}~ \lambda \in \mathbb R. \]
  Likewise, for any two elements $ T,A \in  \mathbb{L}(\mathbb{X}), $ $ T $ is said to be orthogonal to $ A $ in the sense of Birkhoff-James, written as $ T \perp_B A, $ if 
  \[ \|T\| \leq \|T+ \lambda A\| ~\mbox{for all}~ \lambda \in \mathbb R. \]
	\noindent We refer the readers to the classic works \cite{B,Ja,Jb}, for more information on Birkhoff-James orthogonality.
	\noindent For a bounded linear operator $ T \in \mathbb{L}(\mathbb{X}), $ let $ M_T $ denote the collection of all unit vectors in $ \mathbb{X} $ at which $ T $ attains norm, i.e., 
\[ M_T = \{  x \in S_{\mathbb{X}} : \| Tx \| = \| T \|  \}. \]

\noindent In this paper, given $ T \in \mathbb{L}(\mathbb{X}, \mathbb{Y}), $ we first obtain a necessary condition for $ x \in S_{\mathbb{X}} $ to be such that $ x \in M_T. $ The condition can be expressed in a particularly convenient form, if the Banach spaces $ \mathbb{X}, \mathbb{Y} $ are smooth. As it turns out, a particular concept, introduced in \cite{Sb} plays a very significant role in the whole scheme of things. For the sake of completeness, let us mention the relevant definitions here : \\
For any two elements $ x, y $ in $ \mathbb{X}, $ let us say that $ y \in x^{+} $ if $ \| x + \lambda y \| \geq \| x \| $ for all $ \lambda \geq 0. $ Accordingly, we say that $ y \in x^{-} $ if $ \| x + \lambda y \| \geq \| x \| $ for all $ \lambda \leq 0. $ Basic properties related to this notion have been explored in Proposition $ 2.1 $ of \cite{Sb}. Let $ x^{\perp} = \{ y \in \mathbb{X} : x \perp_{B} y \}. $ Using this concept, we obtain a necessary condition for $ T \in \mathbb{L}(\mathbb{X}) $ to attain norm at $ x \in S_{\mathbb{X}}. $ We prove four corollaries to our main result, Theorem $ 2.2, $ in order to illustrate its importance and strength. Let us recall the relevant definitions in this context. \\
For an element $ x \in  \mathbb{X}, $ let us say that $ x $ is left symmetric (with respect to Birkhoff-James orthogonality) if $ x \perp_{B} y $ implies $ y \perp_{B} x $ for any $ y \in \mathbb{X}. $ Similarly, let us say that $ x $ is right symmetric (with respect to Birkhoff-James orthogonality) if $ y \perp_{B} x $ implies $ x \perp_{B} y $ for any $ y \in \mathbb{X}. $ $ T \in \mathbb{L}(\mathbb{X}) $ is said to satisfy the Daugavet equation if $ \| I+T \| = 1 + \| T \|, $ where $ I $ is the identity operator on $ \mathbb{X}. $ Using these concepts, we apply Theorem $ 2.2 $ to obtain various interesting properties of a bounded linear operator on a smooth Banach space. First, we obtain an expression for the kernel of a  bounded linear operator defined on a Banach space. Next, we prove that if the underlying Banach space is finite dimensional, strictly convex and smooth, then every linear operator, satisfying the Daugavet equation, must have an invariant subspace of codimension one. We also prove that in a finite dimensional smooth Banach space $ \mathbb{X} $, if the kernel of a nonzero bounded linear operator $ T $ contains a nonzero right symmetric point, then $ span~M_T $ is a proper subspace of $ \mathbb{X}. $ As the last corollary, we prove that in smooth Banach spaces, image of a left symmetric point under an isometry must be a left symmetric point.\\
It is easy to observe that $ T \in \mathbb{L}(\mathbb{X}) $ is a scalar multiple of an isometry if and only if $ M_T  = S_{\mathbb{X}}. $ It was proved in \cite{K} that  a norm one linear operator $ T \in \mathbb{L}(\mathbb{X}) $ is an isometry if and only if $ T $ preserves Birkhoff-James orthogonality, i.e., $ x \perp_B y \Rightarrow Tx \perp_B Ty. $ Motivated by this characterization of isometries on Banach spaces, it is natural to ask whether every bounded linear operator must preserve Birkhoff-James orthogonality at some point of the unit sphere. It was indeed a pleasant surprise to us that this question can be answered in the affirmative for compact linear operators defined on reflexive and smooth Banach spaces. In this connection, we also obtain a characterization of smooth Banach spaces in terms of operator norm attainment and Birkhoff-James orthogonality.\\
As another potential application of Theorem $ 2.2, $ we explore the possible norm attainment set of a bounded linear operator defined on $ l_{p}^{2} $ spaces. For a bounded subset $ A $ of a Banach space $ \mathbb{X}, $ let $ | A | $ denote the cardinality of $ A. $ If $ A $ is finite then $ | A | $ is the number of elements in $ A. $ Let $ T \in \mathbb{L}(l_{p}^{2}) (p \in \mathbb{N}\setminus \{ 1 \}) $ be such that $ T $ is not a scalar multiple of an isometry on $ l_{p}^{2}. $ Using Theorem $ 2.2, $ we prove that $ | M_T | \leq 2(8p - 5). $ To the best of our knowledge, such an estimation is being presented for the very first time. It should be noted that our estimation may not be optimal and there remains the scope to obtain better estimations by using other geometric and analytic arguments. Moreover, we strongly expect that such a result would open up the possibilities of obtaining analogous results for bounded linear operators defined on classical Banach spaces of higher dimensions.

\section{ Main Results.}

Let us begin this section with an easy proposition.

\begin{prop}
Let $ \mathbb{X}, \mathbb{Y} $ be Banach spaces, $ T \in \mathbb{L}(\mathbb{X}, \mathbb{Y}) $ and $ x \in M_T.  $ Then for any $ y \in \mathbb{X}, Tx \perp_{B} Ty \Rightarrow x \perp_{B} y. $  
\end{prop}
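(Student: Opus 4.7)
The plan is to unwind the definition of Birkhoff-James orthogonality and combine submultiplicativity of the operator norm with the norm attainment hypothesis. From the hypothesis $Tx \perp_B Ty$, the definition immediately yields $\|Tx\| \leq \|Tx + \lambda Ty\|$ for every $\lambda \in \mathbb{R}$, and the goal is to transfer this inequality from the range side to the domain side.

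The key steps are as follows. First I would use linearity of $T$ to rewrite $\|Tx + \lambda Ty\| = \|T(x+\lambda y)\|$, and then apply the operator norm bound $\|T(x+\lambda y)\| \leq \|T\|\,\|x+\lambda y\|$. Chaining these gives $\|Tx\| \leq \|T\|\,\|x+\lambda y\|$ for every $\lambda \in \mathbb{R}$. Next I would invoke the hypothesis $x \in M_T \subseteq S_{\mathbb{X}}$, which forces the equalities $\|Tx\| = \|T\| = \|T\|\,\|x\|$. Substituting into the previous inequality and dividing through by $\|T\|$ (which is legitimate in the only nondegenerate case, $T \neq 0$) yields $\|x\| \leq \|x+\lambda y\|$ for every $\lambda \in \mathbb{R}$, which is precisely $x \perp_B y$.

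I do not anticipate a substantial obstacle: the statement is essentially immediate from submultiplicativity combined with the saturation of that inequality at a norm-attaining unit vector. The only tacit point is that one needs $T \neq 0$ in order to cancel $\|T\|$; the case $T = 0$ is degenerate (the hypothesis $Tx \perp_B Ty$ is then vacuous while the conclusion need not hold) and should be read as implicitly excluded.
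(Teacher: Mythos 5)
Your proof is correct and rests on the same mechanism as the paper's: the chain $\|T\| = \|Tx\| \le \|Tx+\lambda Ty\| = \|T(x+\lambda y)\| \le \|T\|\,\|x+\lambda y\|$, which the paper runs contrapositively (normalizing a hypothetical $x+\lambda_0 y$ of norm less than one to contradict $\|T\| = \sup_{\|z\|=1}\|Tz\|$) and you run directly. Your direct version is slightly cleaner, and your observation that $T=0$ must be tacitly excluded is a fair one -- the paper's argument also silently breaks down in that degenerate case.
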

\begin{proof}
If possible suppose that $ x \not\perp_{B} y. $ Then there exists a nonzero scalar $ \lambda_{0} \in \mathbb{R} $ such that $ \| x+\lambda_0 y \| < \| x \| = 1. $ Without loss of generality, we may and do assume that $ \lambda_0 < 0. $
Consider the element $ z = \frac{x+\lambda_0 y}{\| x+\lambda_0 y \|}. $ Clearly, $ z = \alpha x + \beta y, $ where $ \alpha = \frac{1}{\| x+\lambda_0 y \|} > 1, \beta = \frac{\lambda_0}{\| x+\lambda_0 y \|} < 0. $ Now, we have,\\
\noindent $ \| Tz \| = \| \alpha Tx + \beta Ty \| = |\alpha| \|  Tx + \frac{\beta}{\alpha} Ty \| > \| Tx \| = \| T \|, $ since $ |\alpha| > 1, Tx \perp_B Ty, x \in M_T. $ However, this is clearly a contradiction, since $ \| z \| = 1. $ This completes the proof of the proposition. 
\end{proof}

In fact, the converse to Proposition $ 2.1 $ is also true if both $ \mathbb{X}, \mathbb{Y} $ are smooth Banach spaces. To this end, let us prove a more general result that also gives a necessary condition for the norm attainment of a bounded linear operator on a Banach space, at a particular point of the unit sphere.

\begin{theorem}\label{theorem:preserve}
Let $ \mathbb{X}, \mathbb{Y} $ be Banach spaces, $ T \in \mathbb{L}(\mathbb{X}, \mathbb{Y}) $ and $ x \in M_T.  $ Then \\
(i) $ T(x^{+} \setminus x^{\perp}) \subseteq (Tx)^{+} \setminus (Tx)^{\perp}, $ \\
(ii) $ T(x^{-} \setminus x^{\perp}) \subseteq (Tx)^{-} \setminus (Tx)^{\perp}. $ \\
If in addition, both $ \mathbb{X} $ and $ \mathbb{Y} $ are smooth then $ T(x^{\perp}) \subseteq (Tx)^{\perp}. $
\end{theorem}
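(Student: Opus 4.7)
The plan is to exploit convexity of the one-variable norm functions $f(\lambda) = \|x + \lambda y\|$ and $g(\lambda) = \|Tx + \lambda Ty\|$ together with the fundamental inequality
\[ g(\lambda) = \|T(x + \lambda y)\| \leq \|T\|\,\|x + \lambda y\| = \|T\|\, f(\lambda), \]
which becomes an equality at $\lambda = 0$ because $x \in M_T$ gives $g(0) = \|Tx\| = \|T\| = \|T\| f(0)$.

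For part (i), I would take $y \in x^+ \setminus x^\perp$. Since $y \in x^+$ forces $f(\lambda) \geq 1$ for all $\lambda \geq 0$ while $y \notin x^\perp$ furnishes some $\lambda_0 \neq 0$ with $f(\lambda_0) < 1$, we must have $\lambda_0 < 0$. The fundamental inequality at $\lambda = \lambda_0$ immediately yields $g(\lambda_0) \leq \|T\| f(\lambda_0) < \|T\| = g(0)$, so $Tx \not\perp_B Ty$ and hence $Ty \notin (Tx)^\perp$. To check $Ty \in (Tx)^+$, I would suppose toward contradiction that $g(\mu_1) < g(0)$ for some $\mu_1 > 0$. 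Choosing the unique $t \in (0,1)$ with $t\lambda_0 + (1-t)\mu_1 = 0$, convexity of $g$ yields
\[ \|T\| = g(0) \leq t\, g(\lambda_0) + (1-t)\, g(\mu_1) < t\|T\| + (1-t)\|T\| = \|T\|, \]
an absurdity. Part (ii) then falls out of the symmetric argument with all signs reversed, giving $\lambda_0 > 0$ and the analogous convex sandwich on a negative $\mu_1$.

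For the smooth conclusion, take $y \in x^\perp$. Smoothness of $\mathbb{X}$ forces $f$ to be differentiable at $0$, and because $y \in x^\perp$ makes $\lambda = 0$ a minimum of the convex function $f$, we obtain $f'(0) = 0$. The fundamental inequality rewrites as $\|T\| f - g \geq 0$ with equality at $\lambda = 0$, so $\lambda = 0$ is a maximum of $g - \|T\| f$. Comparing one-sided derivatives yields $g'_+(0) \leq \|T\| f'_+(0) = 0$ and $g'_-(0) \geq \|T\| f'_-(0) = 0$, and combining with the convexity bound $g'_-(0) \leq g'_+(0)$ sandwiches both one-sided derivatives to $0$. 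Hence $0 \in [g'_-(0),\, g'_+(0)]$ certifies that $\lambda = 0$ is a minimum of the convex function $g$, so $Ty \in (Tx)^\perp$.

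The only real technical care concerns the handling of one-sided derivatives of $f$ and $g$, which is mild because convex functions on $\mathbb{R}$ have one-sided derivatives at every interior point of their domain. One notable observation is that this approach appears to invoke only smoothness of $\mathbb{X}$; the smoothness of $\mathbb{Y}$ included in the hypothesis may be required for a more symmetric proof via support functionals at $Tx$ (where uniqueness of $\psi \in S_{\mathbb{Y}^*}$ with $\psi(Tx) = \|Tx\|$ would let one identify $(Tx)^\perp$ with $\ker \psi$), or simply kept for downstream applications where both smoothness hypotheses are used in tandem.
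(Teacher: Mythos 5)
Your proof is correct, and on the final assertion it takes a genuinely different---and sharper---route than the paper. For (i) and (ii) the two arguments are close in substance: the paper normalizes $z=(x+\lambda_0u)/\|x+\lambda_0u\|$, observes $\|Tz\|>\|Tx+\tfrac{\beta}{\alpha}Tu\|$ because the scaling factor exceeds $1$, rules out $Tu\in(Tx)^{-}$ by contradiction with $x\in M_T$, and then invokes the decomposition $\mathbb{Y}=(Tx)^{+}\cup(Tx)^{-}$ from Proposition $2.1$ of \cite{Sb} to land in $(Tx)^{+}\setminus(Tx)^{\perp}$; your three-point convexity inequality for $g$ does the same job directly, without the external proposition. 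The real divergence is in the smooth case. The paper identifies $x^{\perp}$ with the unique supporting hyperplane $H_0$, approximates $w\in x^{\perp}$ by sequences from $x^{+}\setminus x^{\perp}$ and from $x^{-}\setminus x^{\perp}$, pushes these through $T$ via (i) and (ii), and then needs smoothness of $\mathbb{Y}$ to conclude $\overline{(Tx)^{+}\setminus(Tx)^{\perp}}\cap\overline{(Tx)^{-}\setminus(Tx)^{\perp}}=(Tx)^{\perp}$. Your derivative argument---$f'(0)=0$ from Gateaux differentiability of the norm at $x$ together with the minimum at $0$, then the sandwich $0=\|T\|f'_{-}(0)\le g'_{-}(0)\le g'_{+}(0)\le\|T\|f'_{+}(0)=0$ forcing $0$ to be a global minimum of the convex function $g$---bypasses the codomain entirely. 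Your closing observation is correct and worth emphasizing: smoothness of $\mathbb{Y}$ is not used anywhere in your proof, and even smoothness of $\mathbb{X}$ is only needed at the single point $x$, so you have established a strictly stronger statement than the one the paper proves (the paper's Example $2.2.1$, where both spaces fail to be smooth, is consistent with this). The one-sided derivatives you rely on exist because $f$ and $g$ are convex, so there is no hidden gap there.
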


\begin{proof}
Let us first prove (i). Let $ u \in x^{+} \setminus x^{\perp} $ be chosen arbitrarily. Since $ x \not\perp_{B} u, $ there exists a nonzero scalar $ \lambda_0  $ such that $ \| x+\lambda_0 u \| < \| x \| = 1. $ Since $ u \in x^{+}, $ we must have $ \lambda_0 < 0. $ As in the proof of proposition $ 2.1, $ consider the element $ z = \frac{x+\lambda_0 u}{\| x+\lambda_0 u \|} = \alpha x + \beta u, $ where $ \alpha = \frac{1}{\| x+\lambda_0 u \|} > 1, \beta = \frac{\lambda_0}{\| x+\lambda_0 u \|} < 0. $ Now, we have, \\
\noindent $ \| Tz \| = \| \alpha Tx + \beta Tu \| = |\alpha| \|  Tx + \frac{\beta}{\alpha} Tu \| > \| Tx + \frac{\beta}{\alpha} Tu \|, $ since $ |\alpha| > 1. $ \\
We claim that $ Tu \notin (Tx)^{-}. $  Suppose, $ Tu \in (Tx)^{-}. $ Since $ \frac{\beta}{\alpha} < 0, $ it follows from Proposition $ 2.1 $ of \cite{Sb} that $ \frac{\beta}{\alpha} Tu \in (Tx)^{+}. $ Therefore, we have, \\
\noindent $ \| Tz \| > \| Tx + \frac{\beta}{\alpha} Tu \| \geq \| Tx \| = \| T \|, $ since $ x \in M_{T}. $ Clearly, this is a contradiction as $ \| z \| = 1. $ Thus, we must have, $ Tu \notin (Tx)^{-}. $ It now follows from Proposition $ 2.1 $ of \cite{Sb} that $ Tu \in (Tx)^{+} \setminus (Tx)^{\perp}. $ Since $ u \in x^{+} \setminus x^{\perp} $ was chosen arbitrarily, this completes the proof of (i). \\
\noindent The proof of (ii) can now be completed using similar arguments. \\
\noindent Let us now assume that in addition, $ \mathbb{X}, \mathbb{Y} $ are smooth Banach spaces. Then there exists a unique hyperplane of support $ x+H_0 $ to $ B_{\mathbb{X}} $ at $ x, $ where $ H_0 $ is a subspace of $ \mathbb{X}, $ having codimension one. Clearly, $ H_0 \equiv x^{\perp}. $ $ H_0 $ divides $ \mathbb{X} $ into two closed half-planes whose intersection is $ H_0. $ Let $ H_1 $ denote the closed half-plane containing $ x $ and let $ H_2 $ denote the other closed half-plane. Then it is easy to see that $ H_1 \equiv x^{+} $ and $ H_2 \equiv x^{-}. $ Furthermore, every element of $ H_0 $ can be approximated by elements exclusively from either of the sets $ H_1 \setminus H_0 $ and $ H_2 \setminus H_0. $ \\
\noindent Let $ w \in x^{\perp} $ be arbitrary. Let $ \{y_n\} $ be a sequence in $ H_1 \setminus H_0 $ such that $ y_n \longrightarrow w. $ It now follows from (i) and the respective identifications of $ x^{\perp}, x^{+} $ with $ H_0, H_1 $ that $ T(y_n) \in (Tx)^{+} \setminus (Tx)^{\perp}. $ Since $ T $ is continuous and $ y_n \longrightarrow w, $ we must have, $ Tw \in \overline{(Tx)^{+} \setminus (Tx)^{\perp}}. $ Now, considering a sequence in $ H_2 \setminus H_0 $ that converges to $ w $ and using similar arguments, it is easy to show that $ Tw \in \overline{(Tx)^{-} \setminus (Tx)^{\perp}}. $ Thus, we have, \\
\[Tw \in \overline{(Tx)^{+} \setminus (Tx)^{\perp}} \cap \overline{(Tx)^{-} \setminus (Tx)^{\perp}}.\]

\noindent However, since $ \mathbb{Y} $ is smooth, $ \overline{(Tx)^{+} \setminus (Tx)^{\perp}} \cap \overline{(Tx)^{-} \setminus (Tx)^{\perp}} = (Tx)^{\perp}. $ This proves that $ Tw \in (Tx)^{\perp} $ for each $ w \in x^{\perp}, $ thereby completing the proof of the theorem. 
\end{proof}

We now obtain a number of corollaries to our main result, Theorem $ 2.2, $ in order to establish its significance. We would also like to remark that the varying applications of Theorem $ 2.2 $ betray its strength and importance in the study of the geometry of Banach spaces. First, we obtain an expression for the kernel of a  bounded linear operator defined on a Banach space, in terms of norm attainment. 

\begin{cor}
Let $ \mathbb{X} $ be a Banach space and let $ T \in \mathbb{L}(\mathbb{X}). $ Then
\[  ker~T \subseteq \bigcap_{x \in M_T} x^{\perp}. \]
In particular, if $ \mathbb{X} $ is a two dimensional smooth and strictly convex Banach space, then any linear operator attaining norm at more than one pair of points must be invertible.
\end{cor}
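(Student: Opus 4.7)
\medskip

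\noindent\textbf{Proof plan.} The inclusion $\ker T \subseteq \bigcap_{x\in M_T} x^\perp$ is essentially immediate from Proposition 2.1. Fix any $y \in \ker T$ and any $x \in M_T$. Since $Ty = 0$, one has $\|Tx\| = \|Tx + \lambda \cdot 0\|$ trivially, so $Tx \perp_B Ty$. Proposition 2.1 then yields $x \perp_B y$, i.e., $y \in x^\perp$. Because $x \in M_T$ was arbitrary, $y$ lies in the intersection.

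\medskip

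\noindent For the second assertion, I would argue by contradiction. Suppose $\mathbb{X}$ is two-dimensional, smooth, and strictly convex, $T \in \mathbb{L}(\mathbb{X})$ attains norm at two pairs $\{\pm x_1\}, \{\pm x_2\}$ with $x_2 \neq \pm x_1$, and $T$ is not invertible. Then $\ker T$ contains a nonzero vector $y$, and by the first part $x_1 \perp_B y$ and $x_2 \perp_B y$. Smoothness at each $x_i$ gives a unique supporting hyperplane, i.e., $x_i^\perp$ is a subspace of codimension one in $\mathbb{X}$; in dimension two this forces $x_i^\perp$ to be the one-dimensional subspace $\mathrm{span}(y)$ for $i = 1,2$. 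If $f_{x_i}$ denotes the unique norm-one support functional at $x_i$, then $\ker f_{x_1} = \ker f_{x_2} = \mathrm{span}(y)$, so $f_{x_2} = \pm f_{x_1}$.

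\medskip

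\noindent The final step uses strict convexity: a norm-one functional attains its norm at a unique unit vector, so $f_{x_1}$ is attained only at $x_1$ and $-f_{x_1}$ only at $-x_1$. Therefore $x_2 \in \{x_1, -x_1\}$, contradicting the choice of $x_1, x_2$ as representing distinct pairs. Hence $T$ must be invertible. The main point to handle carefully is the translation between the abstract relations $x \perp_B y$, $x^\perp$ and the functional-analytic objects $f_x$, $\ker f_x$, which is where smoothness and strict convexity combine to pin down the norm-attainment vector from the orthogonality data.
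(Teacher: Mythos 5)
Your proposal is correct, and both halves take routes that differ from the paper's. For the inclusion $\ker T \subseteq \bigcap_{x \in M_T} x^{\perp}$, you invoke Proposition 2.1 directly via the trivial orthogonality $Tx \perp_B 0$; the paper instead applies Theorem 2.2 together with the decomposition $\mathbb{X} = (x^{+}\setminus x^{\perp}) \cup (x^{-}\setminus x^{\perp}) \cup x^{\perp}$ to rule out $z$ lying outside $x^{\perp}$. Your version is shorter and makes explicit that only the elementary Proposition 2.1 (valid in any Banach space, no smoothness) is needed here. For the invertibility claim, the paper proves $x_1^{\perp} \cap x_2^{\perp} = \{0\}$ by using the left-additivity of Birkhoff--James orthogonality in two-dimensional strictly convex spaces (which the text calls ``left symmetric'') to get $(\alpha x_1 + \beta x_2) \perp_B z$ for all scalars and hence $z \perp_B z$, forcing $z = 0$. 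You instead pass to the unique support functionals $f_{x_1}, f_{x_2}$ (using smoothness to identify $x_i^{\perp}$ with the one-dimensional subspace $\ker f_{x_i}$), deduce $f_{x_2} = \pm f_{x_1}$ from equality of kernels, and then use the standard fact that in a strictly convex space a norm-one functional attains its norm at at most one unit vector to force $x_2 = \pm x_1$. Both arguments are sound; yours trades the left-additivity lemma for the duality between smoothness/strict convexity and support functionals, which is arguably more self-contained since the paper never proves the additivity fact it cites, while the paper's version isolates the cleaner intermediate statement $x_1^{\perp} \cap x_2^{\perp} = \{0\}$.
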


\begin{proof}
We first note that if $ T $ does not attain norm, i.e., $ M_T = \emptyset, $ then the first part of the theorem follows trivially. Let us assume that $ M_T \neq \emptyset. $ Let $ z \in ker~T $ be chosen arbitrarily. For any $ x \in M_T, $ we have,  $ Tz \in (Tx)^{\perp}, $ since $ Tz = 0. $ Theorem $ 2.2 $ implies that $ z \notin x^{+} \setminus x^{\perp} $ and $ z \notin x^{-} \setminus x^{\perp}. $ Since $ \mathbb{X} = (x^{+} \setminus x^{\perp}) \cup (x^{-} \setminus x^{\perp}) \cup x^{\perp}, $ it now follows that, $ z \in x^{\perp}. $ As this is true for every $ z \in ker~T $ and for every $ x \in M_T, $ we must have : 
\[ ker~T \subseteq \bigcap_{x \in M_T} x^{\perp}. \]

Let us now assume that $ \mathbb{X} $ is a two dimensional smooth and strictly convex Banach space and let $ T \in \mathbb{L}(\mathbb{X}). $ Let $ x_1, x_2 \in M_T $ be such that $ x_1 \neq \pm x_2. $ We first claim that $ x_{1}^{\perp} \cap x_{2}^{\perp} = \{0\}. $ Let $ z \in x_{1}^{\perp} \cap x_{2}^{\perp}. $ Since $ \mathbb{X} $ is a two dimensional strictly convex Banach space, Birkhoff-James orthogonality is left symmetric in $ \mathbb{X}. $ Therefore, we have : 
\[ (\alpha x_1 + \beta x_2) \perp_{B} z, \text{for any scalars $ \alpha, \beta. $ } \]
In particular, this implies that $ z \perp_{B} z. $ Therefore, we must have, $ z = 0. $ Now, the second part of the corollary follows directly from the first part, since $ ker~T \subseteq (x_{1}^{\perp} \cap x_{2}^{\perp}) = \{0\} \subseteq ker~T. $ 
\end{proof}

In the next corollary, we prove that any linear operator defined on a finite dimensional smooth and strictly convex Banach space $  \mathbb{X} $, that satisfies the Daugavet equation, must have an invariant subspace of codimension one.

\begin{cor}
Let $ \mathbb{X} $ be a finite dimensional smooth and strictly convex Banach space and let $ T \in \mathbb{L}(\mathbb{X}) $ satisfies the Daugavet equation $ \|I + T\| = 1 + \| T \|. $ Then $ \mathbb{X} $ has a $ T- $invariant subspace of codimension one.
\end{cor}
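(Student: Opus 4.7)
The plan is to combine the Daugavet equation with the finite-dimensionality of $\mathbb{X}$ to produce a norm-attaining direction that is actually an eigenvector of $T$, and then let Theorem $2.2$ do the work of furnishing the invariant hyperplane.

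First I would dispose of the trivial case $T=0$ (for which every codimension one subspace is invariant), and assume henceforth that $T\neq 0$. Since $\mathbb{X}$ is finite dimensional, $I+T$ attains its norm at some $x\in S_{\mathbb{X}}$, so
\[
1+\|T\|=\|I+T\|=\|x+Tx\|\leq \|x\|+\|Tx\|\leq 1+\|T\|.
\]
Forcing equality throughout yields two pieces of information: $\|Tx\|=\|T\|$, so that $x\in M_T$, and also $\|x+Tx\|=\|x\|+\|Tx\|$.

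Next I would use strict convexity. In a strictly convex Banach space, $\|u+v\|=\|u\|+\|v\|$ with $u,v$ both nonzero forces $u$ and $v$ to be positive scalar multiples of each other. Applied to $u=x$ and $v=Tx$ (both nonzero, since $T\neq 0$ and $\|Tx\|=\|T\|>0$), this gives $Tx=\mu x$ for some $\mu>0$; taking norms yields $\mu=\|T\|$. Hence $x$ is an eigenvector of $T$ corresponding to the eigenvalue $\|T\|$.

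Finally I would invoke Theorem $2.2$. Since $\mathbb{X}$ and $\mathbb{Y}=\mathbb{X}$ are smooth and $x\in M_T$, we have $T(x^{\perp})\subseteq (Tx)^{\perp}$. But $Tx=\|T\|x$, and Birkhoff-James orthogonality is invariant under nonzero scalar multiples of the left factor, so $(Tx)^{\perp}=x^{\perp}$, giving $T(x^{\perp})\subseteq x^{\perp}$. Smoothness of $\mathbb{X}$ guarantees that $x^{\perp}$ coincides with the kernel of the unique support functional at $x$, hence is a genuine linear subspace of codimension one, which is the required $T$-invariant hyperplane.

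I do not anticipate a serious obstacle: both the eigenvector extraction (via the equality case of the triangle inequality in a strictly convex space) and the passage from $x\in M_T$ to an invariant hyperplane (via Theorem $2.2$ together with the linearity of $x^{\perp}$ in the smooth setting) are standard once assembled in this order. The only care needed is to verify that $x^{\perp}$ is indeed a linear subspace of codimension one in a smooth space, which follows from the additivity of Birkhoff-James orthogonality in smooth spaces.
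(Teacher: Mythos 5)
Your proposal is correct and follows essentially the same route as the paper: extract a norm-attaining point for $I+T$, use strict convexity and the equality case of the triangle inequality to show it is an eigenvector in $M_T$, and then apply Theorem $2.2$ together with smoothness to get the invariant hyperplane $x^{\perp}$. The only cosmetic difference is that the paper normalizes $\|T\|=1$ (so $Tx_0=x_0$) while you keep the eigenvalue $\|T\|$ and appeal to homogeneity of Birkhoff-James orthogonality, which is immaterial.
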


\begin{proof}
We first note that if $ T $ is the zero operator then the theorem is trivially true. Let us assume that $ T $ is nonzero. We also observe from Lemma $ 2.1 $ of \cite{A} that for any nonzero bounded linear operator $ T $ on $ \mathbb{X}, $ $ T $ satisfies the Daugavet equation if and only if $ \alpha T $ satisfies the Daugavet equation, where $ \alpha > 0 $ is any scalar. Thus, without loss of generality, we may and do assume that $ \| T \| = 1. $ The next thing to observe is that since $ \mathbb{X} $ is finite dimensional, there exists a unit vector $ x_0 $ such that $ \| I + T \| = \| (I + T) x_0 \|. $ We claim that $ Tx_0 = x_0. $ Indeed,
\[ 2 = \| I+T \| = \| (I + T) x_0 \| = \| x_0 + Tx_0 \| \leq \| x_0 \| + \| Tx_0 \| \leq 1 + \| T \| = 2. \]
Since $ \mathbb{X} $ is strictly convex, $ \| x_0 + Tx_0 \| < \| x_0 \| + \| Tx_0 \|, $ if $ Tx_0 \notin \{kx_0 : k \geq 0\}. $ Therefore, we must have, $ Tx_0 = k_0x_0, $ for some $ k_0 \geq 0. $  On the other hand, $ 2 = \| x_0 + Tx_0 \| \leq \| x_0 \| + \| Tx_0 \| = 1 + \| Tx_0 \| \leq 2 $ implies that $ k_0 = \| Tx_0 \| = 1. $ This proves our claim.\\
Thus, we have, $ 1 = \| T \| = \| Tx_0 \|. $ This proves that $ x_0 \in M_T. $ Since $ \mathbb{X} $ is smooth and $ x_0 \in M_T, $ applying Theorem $ 2.2, $ we have, 
\[ T(x_{0}^{\perp}) \subseteq (Tx_0)^{\perp} = x_{0}^{\perp}. \]
Since $ \mathbb{X} $ is smooth, $ x_{0}^{\perp} $ is a subspace of $ \mathbb{X}, $ having codimension one. Thus, $ x_{0}^{\perp} $ is a $ T- $invariant subspace of $ \mathbb{X}, $ having codimension one.
\end{proof}

\begin{remark}
In particular, it follows from the method used in the proof of Corollary $ 2.2.2 $ that if $ T $ is a norm one linear operator on a finite dimensional strictly convex and smooth Banach space $ \mathbb{X} $ such that $ T $ satisfies the Daugavet equation then $ T $ has a fixed point on $ S_{\mathbb{X}} $.
\end{remark}

Next, we prove that in a finite dimensional smooth Banach space $ \mathbb{X} $, if the kernel of a nonzero bounded linear operator $ T $ contains a nonzero right symmetric point, then $ span~M_T $ is a proper subspace of $ \mathbb{X}. $ 

\begin{cor}
Let $ \mathbb{X} $ be a finite dimensional smooth  Banach space and let $ T \in \mathbb{L}(\mathbb{X}) $ be a nonzero bounded linear operator such that $ ker~T $ contains a nonzero right symmetric point. Then $ span~M_T $ is a proper subspace of $ \mathbb{X}. $ In particular, if $ T \in \mathbb{L}(l_{p}^{n}) $ is such that for some $ i \in \{1, 2, \ldots, n\}, T(e_i) = 0, $ where $ e_i $ denotes the unit vector whose $ i- $th component is $ 1 $ and all other components are $ 0, $ then 
\[min~\{\|Tx_j\| : j \in \{1, 2, \ldots, n\}\} < \| T \|, \text{for any basis $ \{ x_1, x_2, \ldots, x_n \} $ of $ l_{p}^{n}. $ }\]  
\end{cor}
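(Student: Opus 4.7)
The plan is to combine Corollary 2.2.1 with the right symmetry hypothesis and the hyperplane structure of $z^{\perp}$ in a smooth Banach space.

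First, I would fix a nonzero right symmetric element $z \in \ker T$. By Corollary 2.2.1, $z \in x^{\perp}$ for every $x \in M_T$; that is, $x \perp_B z$ for every $x \in M_T$. Because $z$ is right symmetric, the implication $x \perp_B z \Rightarrow z \perp_B x$ holds, so $x \in z^{\perp}$ for every $x \in M_T$, i.e., $M_T \subseteq z^{\perp}$. Since $\mathbb{X}$ is smooth, $z^{\perp}$ is a linear subspace of $\mathbb{X}$ of codimension one, and therefore $\mathrm{span}\,M_T \subseteq z^{\perp} \subsetneq \mathbb{X}$, which would establish the first assertion.

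For the $l_{p}^{n}$ application, I would first verify that each standard basis vector $e_i$ is right symmetric. Since $l_{p}^{n}$ is smooth for $p \in \mathbb{N}\setminus\{1\}$, Birkhoff-James orthogonality admits the James criterion: for $u,v \in l_{p}^{n}$,
\[ u \perp_B v \iff \sum_{k=1}^{n} |u_k|^{p-1}\operatorname{sgn}(u_k)\,v_k = 0. \]
A direct calculation shows that both $e_i \perp_B v$ and $v \perp_B e_i$ reduce to $v_i = 0$, so $e_i$ is in fact both left and right symmetric (the case $p = 2$ being immediate since $l_{2}^{n}$ is Hilbert). Hence, assuming $T(e_i) = 0$, the hypothesis of the first part is satisfied with $z = e_i$, and $\mathrm{span}\,M_T$ is a proper subspace of $l_{p}^{n}$.

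To conclude the minimum estimate I would argue by contradiction. Assuming $\{x_1,\dots,x_n\} \subseteq S_{l_{p}^{n}}$ is a basis with $\min\{\|Tx_j\| : 1 \le j \le n\} \geq \|T\|$, the fact that each $x_j$ is a unit vector forces $\|Tx_j\| = \|T\|$ and hence $x_j \in M_T$ for every $j$. But then $\mathrm{span}\,M_T \supseteq \mathrm{span}\{x_1,\dots,x_n\} = l_{p}^{n}$, contradicting the properness established above. The main obstacle I foresee is the second step of the first paragraph: extracting from Corollary 2.2.1 a constraint on $M_T$ rather than on $\ker T$. The right symmetry of $z$ is precisely the tool that flips the orthogonality and localizes $M_T$ inside the single hyperplane $z^{\perp}$; without it, the intersection $\bigcap_{x \in M_T} x^{\perp}$ could be trivial and would carry no structural information about $\mathrm{span}\,M_T$.
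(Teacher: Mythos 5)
Your proof is correct and follows essentially the same route as the paper: Corollary $2.2.1$ gives $x \perp_{B} z$ for all $x \in M_T$, right symmetry of $z$ flips this to $z \perp_{B} x$, and smoothness finishes the argument. The only (cosmetic) difference is that you conclude directly by noting $M_T \subseteq z^{\perp}$, a hyperplane not containing $z$, whereas the paper argues by contradiction, expanding $z$ in a basis drawn from $M_T$ and invoking right-additivity of Birkhoff--James orthogonality (an equivalent consequence of smoothness) to force $z \perp_{B} z$; you also supply the verification that $e_i$ is right symmetric and the deduction of the minimum estimate, which the paper leaves implicit.
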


\begin{proof}
Suppose, on the contrary, $ span~M_T = \mathbb{X}. $ Then $ M_T $ contains a basis $ \{ x_1, x_2, \ldots, x_n \} $ of $ \mathbb{X}. $ Let $ z \in ker~T $ be a nonzero right symmetric point. Without loss of generality, we may and do assume that $ \| z \| = 1. $ \\
Let $ z = \sum_{i=1}^{i=n} c_ix_i, $ for some scalars $ c_1,c_2,\ldots, c_n. $ Applying Corollary $ 2.2.1, $ we see that, $ x_i \perp_B z, $ for each $ i = 1, 2, \ldots, n. $ Since $ z $ is right symmetric, we conclude that $ z \perp_B x_i, $ for each $ i = 1, 2, \ldots, n. $ As $ \mathbb{X} $ is smooth, Birkhoff-James orthogonality is right additive in $ \mathbb{X}. $ Therefore, we must have, $ z \perp_B \sum_{i=1}^{i=n} c_ix_i=z. $ This implies that $ z = 0, $ a contradiction to our assumption that $ z $ is nonzero. This contradiction completes the proof of the first part of the corollary. \\
The second part of the corollary now follows directly from the first part by observing that each $ e_i $ is a right symmetric point in $ l_{p}^{n}. $
\end{proof}

As the final corollary, we prove that in smooth Banach spaces, image of a left symmetric point under an isometry must be a left symmetric point.

\begin{cor}
Let $ \mathbb{X} $ be a smooth  Banach space and let $ T \in \mathbb{L}(\mathbb{X}) $ be an isometry.  If $ x \in \mathbb{X} $ is a left symmetric point then $ Tx $ is also a left symmetric point.
\end{cor}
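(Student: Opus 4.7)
The plan is to exploit the fact that an isometry attains its norm at every point of $S_{\mathbb{X}}$, so Theorem~$2.2$ applies at every unit vector. From this I will deduce that $T$ both preserves and reflects Birkhoff--James orthogonality, after which the left symmetry of $Tx$ follows from that of $x$ in three short lines.

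First, since $\|Tz\| = \|z\|$ for every $z \in \mathbb{X}$, the operator $T$ has norm one and $M_T = S_{\mathbb{X}}$. Fix any nonzero $u \in \mathbb{X}$. Smoothness of $\mathbb{X}$ together with the final assertion of Theorem~$2.2$ (applied at $u/\|u\| \in M_T$) gives $T(u^{\perp}) \subseteq (Tu)^{\perp}$, i.e., $u \perp_{B} v \Rightarrow Tu \perp_{B} Tv$. For the reverse implication, Proposition~$2.1$ applied at $u/\|u\| \in M_T$ gives $Tu \perp_{B} Tv \Rightarrow u \perp_{B} v$. Thus $T$ both preserves and reflects Birkhoff--James orthogonality for any two nonzero vectors.

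The rest is a short chain. Assume $x \neq 0$ (the case $x=0$ being trivial) and let $w \in \mathbb{X}$ satisfy $Tx \perp_{B} w$. Writing $w = Ty$ for a suitable $y$, reflection yields $x \perp_{B} y$; left symmetry of $x$ yields $y \perp_{B} x$; preservation yields $Ty \perp_{B} Tx$, i.e., $w \perp_{B} Tx$. Hence $Tx$ is left symmetric. The only step requiring comment is ``write $w = Ty$,'' which is immediate if \emph{isometry} is taken to include surjectivity (the most common convention in this branch of Banach space geometry, and the one consistent with the Koldobsky-type characterisation quoted in the introduction). This is the anticipated main obstacle under a weaker convention: handling $w \notin T(\mathbb{X})$ would require either restricting the ambient space to $T(\mathbb{X})$ or extending $T$ to a surjective isometry on a suitable overspace.
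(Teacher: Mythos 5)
Your proposal is correct and follows essentially the same route as the paper: use $M_T = S_{\mathbb{X}}$, apply Proposition~$2.1$ to pull $Tx \perp_B Ty$ back to $x \perp_B y$, invoke left symmetry of $x$, and push $y \perp_B x$ forward via the smooth case of Theorem~$2.2$. The surjectivity caveat you raise is the one point where the paper is terser than you are --- it simply asserts ``since $T$ is an isometry, $T$ is invertible'' in order to write $y = Tz$ --- so your explicit flagging of that convention is a fair (and slightly more careful) rendering of the same argument.
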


\begin{proof}
We first note that $ M_T = S_{\mathbb{X}}, $ as $ T $ is an isometry. Let $ Tx \perp_B y, $ for some $ y \in S_{\mathbb{X}}. $ If $ y = 0 $ then $ y \perp_B Tx. $ Let $ y \neq 0. $ Since $ T $ is an isometry, $ T $ is invertible. Let $ z (\neq 0) \in \mathbb{X} $ be such that $ y = Tz. $ Since $ Tx \perp_B Tz $ and $ x \in M_T, $ applying Proposition $ 2.1, $ we have, $ x \perp_B z. $ This implies that $ z \perp_B x, $ as $ x $ is a left symmetric point in $ \mathbb{X}. $ Since $ \mathbb{X} $ is smooth and $ \frac{z}{\|z\|} \in M_T, $ applying Theorem $ 2.2, $ we have, $ T(\frac{z}{\|z\|}) \perp_B Tx. $ Using the homogeneity property of Birkhoff-James orthogonality, it is now easy to see that $ y=Tz\perp_B Tx. $ This completes the proof of the corollary.
\end{proof}

It is interesting to observe that the smoothness condition in the last part of Theorem $ 2.2 $ is indeed required. We give the next example to illustrate our point.
\begin{example}
Consider the linear operator $ T \in \mathbb{L}(l_{2}^{\infty}) $ given by $ T(1, 1) = (1, 0) $ and $ T(1, -1) = (\frac{1}{2}, \frac{1}{2} ). $ Then it is easy to check that $ (1, 1) \in M_T. $ We also note that $ (1, 1) \perp_{B} (-\frac{1}{2}, 1). $ However, we have, $ T(-\frac{1}{2}, 1) = (-\frac{1}{8}, -\frac{3}{8}) \notin (T(1, 1))^{\perp} = (1, 0)^{\perp} = \{ (0, \beta) : \beta \in \mathbb{R} \}. $
\end{example}

Combining Proposition $ 2.1 $ and Theorem $ 2.2, $ we have the following simple and useful necessary condition for the norm attainment of a bounded linear operator on a smooth Banach space, at a particular point of the unit sphere : 

\begin{theorem}\label{theorem:necessary}
Let $ \mathbb{X} $ be a smooth Banach space, $ T \in \mathbb{L}(\mathbb{X}) $ and $ x \in M_T.  $ Then for any $ y \in \mathbb{X}, x \perp_{B} y \Leftrightarrow Tx \perp_{B} Ty.$
\end{theorem}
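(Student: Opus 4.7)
The plan is to observe that Theorem 2.3 is essentially a clean packaging of two results already at our disposal: the converse direction is Proposition 2.1, and the direct direction is the smooth part of Theorem 2.2 (with $\mathbb{Y} = \mathbb{X}$, which is smooth by hypothesis). So I would prove the two implications separately.

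For the implication $Tx \perp_B Ty \Rightarrow x \perp_B y$, no smoothness hypothesis is needed: it is exactly the conclusion of Proposition 2.1, which applies because $x \in M_T$. So a single sentence citing Proposition 2.1 suffices for this direction.

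For the implication $x \perp_B y \Rightarrow Tx \perp_B Ty$, I would invoke the last clause of Theorem 2.2. Since $T \in \mathbb{L}(\mathbb{X})$ with $\mathbb{X}$ smooth, both the domain and the codomain of $T$ are smooth, so the hypothesis $T(x^{\perp}) \subseteq (Tx)^{\perp}$ is available. If $x \perp_B y$, then $y \in x^{\perp}$, and applying the inclusion gives $Ty \in (Tx)^{\perp}$, which is precisely $Tx \perp_B Ty$.

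There is no real obstacle here; the theorem is a corollary of what has already been proved, and the only thing to verify is that $\mathbb{Y} = \mathbb{X}$ being smooth supplies both smoothness assumptions needed in the last part of Theorem 2.2. Taken together, the two implications yield the claimed equivalence.
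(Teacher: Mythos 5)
Your proposal is correct and matches the paper exactly: the paper states this theorem as an immediate consequence of combining Proposition 2.1 (for the implication $Tx \perp_{B} Ty \Rightarrow x \perp_{B} y$) with the smoothness clause of Theorem 2.2 (for the implication $x \perp_{B} y \Rightarrow Tx \perp_{B} Ty$), which is precisely your two-step argument.
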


An isometry on $ \mathbb{X} $ preserves Birkhoff-James orthogonality at every point of $ \mathbb{X} $ \cite{K}. The next theorem may be regarded as a ``local" version of this fact, which is valid for any compact linear operator defined on a reflexive and smooth Banach space. Apart from applying Theorem $ 2.2, $ the only thing that we need to observe is that a compact linear operator on a reflexive Banach space must attain norm.

\begin{theorem}\label{theorem:compact}
Let $ \mathbb{X} $ be a reflexive, smooth Banach space. Let $ T \in \mathbb{K}(\mathbb{X}). $  Then there exists $ x \in S_{\mathbb{X}} $ such that $ T $ preserves Birkhoff-James orthogonality at $ x $ i.e., for any $ y \in \mathbb{X}, x \perp_{B} y \Leftrightarrow Tx \perp_{B} Ty.$ 
\end{theorem}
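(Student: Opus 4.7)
The plan is to realize this theorem as a direct corollary of Theorem 2.3 combined with the classical fact that a compact linear operator on a reflexive Banach space attains its norm. By Theorem 2.3, at any $x \in M_T$ the equivalence $x \perp_B y \Leftrightarrow Tx \perp_B Ty$ already holds for every $y \in \mathbb{X}$ (this is where smoothness of $\mathbb{X}$ is essential, feeding into the inclusion $T(x^\perp) \subseteq (Tx)^\perp$ from Theorem 2.2). So the entire task reduces to exhibiting a single element of $M_T$.

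First I would dispose of the trivial case $T = 0$ separately (or simply assume $T \neq 0$, since otherwise the equivalence is vacuous only if we also permit $Tx = 0$ on both sides of the claim). Then I would verify that $M_T \neq \emptyset$ by the standard compactness-plus-reflexivity argument: pick a sequence $\{x_n\} \subset S_\mathbb{X}$ with $\|Tx_n\| \to \|T\|$; by reflexivity $B_\mathbb{X}$ is weakly compact, so (passing to a subsequence) $x_n \rightharpoonup x_0$ for some $x_0 \in B_\mathbb{X}$; by compactness of $T$, $Tx_n \to Tx_0$ in norm, and hence $\|Tx_0\| = \|T\|$. Since $T \neq 0$, this forces $\|x_0\| = 1$, so $x_0 \in M_T$.

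Second, with $x := x_0 \in M_T$ in hand, I would invoke Theorem 2.3 directly: for every $y \in \mathbb{X}$,
\[
x \perp_B y \iff Tx \perp_B Ty,
\]
which is exactly the orthogonality-preservation property asserted in the theorem.

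I do not foresee any real obstacle here, since the entire content lies in Theorem 2.2/2.3; the only auxiliary input — norm attainment of compact operators on reflexive spaces — is a standard textbook fact and the paper explicitly flags this as the one observation needed beyond Theorem 2.2. The only subtlety worth flagging is the mild degenerate case $T = 0$, where the equivalence $\|Tx\| = \|T\|$ is satisfied by every unit vector but the statement becomes a tautology about the zero vector; this is harmless.
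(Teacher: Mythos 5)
Your proposal is correct and follows exactly the route the paper intends: the paper itself states that beyond Theorem 2.3 (equivalently, Proposition 2.1 plus Theorem 2.2) the only needed observation is that a compact operator on a reflexive space attains its norm, which is precisely your weak-compactness argument producing a point of $M_T$. No substantive difference from the paper's approach.
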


In Example $ 2.2.1, $ we have illustrated the fact that the smoothness condition in the last part of Theorem $ 2.2 $ cannot be relaxed. In fact, in this context it is possible to obtain a nice geometric characterization of smooth Banach spaces in terms of operator norm attainment and Birkhoff-James orthogonality. To this end, we first prove the following theorem :

\begin{theorem}\label{theorem:sufficient}
Let $ \mathbb{X} $ be a Banach space. If for every $ T \in \mathbb{L}(\mathbb{X}) $ and for every $ x \in M_T, $ we have, $ T(x^{\perp}) \subseteq (Tx)^{\perp}, $ then $ \mathbb{X} $ is smooth.   
\end{theorem}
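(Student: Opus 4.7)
The plan is to prove the contrapositive: assuming $\mathbb{X}$ is not smooth, we exhibit a bounded linear operator $T \in \mathbb{L}(\mathbb{X})$ and a point $x \in M_T$ such that $T(x^{\perp}) \not\subseteq (Tx)^{\perp}$. Non-smoothness gives us a point $x_0 \in S_{\mathbb{X}}$ with two distinct supporting functionals $f_1, f_2 \in S_{\mathbb{X}^*}$ satisfying $f_1(x_0) = f_2(x_0) = 1$. The idea is to build a rank-one operator that ``sees'' only $f_1$, so that an orthogonality of $x_0$ witnessed by $f_2$ (but not by $f_1$) is broken by $T$.

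First I would invoke the James characterization of Birkhoff-James orthogonality: $x \perp_B y$ iff there exists $f \in S_{\mathbb{X}^*}$ with $f(x) = \|x\|$ and $f(y) = 0$. Next, since $f_1 \neq f_2$ but both agree at $x_0$, the hyperplanes $\ker f_1$ and $\ker f_2$ must differ (otherwise $f_1$ and $f_2$ would be proportional, and normalization at $x_0$ would force $f_1 = f_2$); so I can pick $y_0 \in \ker f_2 \setminus \ker f_1$. By James, $x_0 \perp_B y_0$ through the functional $f_2$, while $f_1(y_0) \neq 0$.

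Now fix any nonzero $u \in \mathbb{X}$ and define $T \in \mathbb{L}(\mathbb{X})$ by $T(z) = f_1(z) u$. A direct computation gives $\|T\| = \|f_1\|\,\|u\| = \|u\|$, and since $|f_1(x_0)| = 1$ we obtain $x_0 \in M_T$ with $Tx_0 = u$. Applying $T$ to our chosen orthogonal element yields $Ty_0 = f_1(y_0)\,u$, a nonzero scalar multiple of $u = Tx_0$. Since $u \not\perp_B u$ (the only vector orthogonal to itself in the Birkhoff-James sense is zero), we conclude $Tx_0 \not\perp_B Ty_0$, i.e.\ $Ty_0 \notin (Tx_0)^{\perp}$, although $y_0 \in x_0^{\perp}$. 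This is the desired violation of the hypothesis, completing the contrapositive.

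The main content is really just Step~1, namely translating failure of smoothness into the existence of two distinct norming functionals at some unit vector — this is a standard duality fact (uniqueness of the norming functional at $x_0$ characterizes smoothness of $\mathbb{X}$ at $x_0$). Once that is in hand, the rank-one construction $T(\cdot) = f_1(\cdot)\,u$ is the natural object to separate the two functionals, and the remaining verifications are immediate. No significant obstacle arises beyond correctly picking $y_0$ in $\ker f_2 \setminus \ker f_1$, which is possible precisely because $f_1 \neq f_2$.
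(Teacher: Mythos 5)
Your proof is correct and is essentially the paper's argument recast in dual language: the paper phrases non-smoothness as the existence of two distinct hyperplanes $H_{x_1}, H_{x_2}$ with $x_0 \perp_B H_{x_i}$ (your $\ker f_1, \ker f_2$) and uses the operator $T(\alpha x_0 + h) = \alpha x_0$, which is exactly your rank-one map $T(z) = f_1(z)\,u$ with $u = x_0$. The selection of $y_0 \in \ker f_2 \setminus \ker f_1$ and the final contradiction are identical in substance.
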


\begin{proof}
$ \mathbb{X} $ is smooth if and only if given any $ x \in S_{\mathbb{X}}, $ there exists a unique hyperplane $ H_x $  of codimension one such that $ x \perp_{B} H_x. $ Suppose on the contrary, there exists a $ x_0 \in S_{\mathbb{X}} $ such that $ x_0 \perp_{B} H_{x_{1}} $ and $ x_0 \perp_{B} H_{x_{2}}, $ where $ H_{x_{1}}, H_{x_{2}} $ are two different hyperplanes of codimension one. Clearly, any element $ z  \in \mathbb{X} $ can be written as $ z = \alpha x_0 + h, $ where $ \alpha \in \mathbb{R}, h \in H_{x_{1}}. $ Let us define an operator $ T  $ on $ \mathbb{X} $ in the following way :
\[T(\alpha x_0 + h) = \alpha x_0, \text{for each}~ \alpha \in \mathbb{R}~ \text{and for each}~ h \in H_{x_{1}}. \]

Clearly, $ T $ is well-defined and linear. Since $ x_0 \perp_{B} H_{x_{1}}, $ it is easy to check that $ T $ is bounded and $ x_0 \in M_T. $ Let us now choose a $ y \in H_{x_{2}} \setminus H_{x_{1}}. $ Let $ y = \alpha_0 x_0 + h_0, $ where $ \alpha_0 \in \mathbb{R}  $ and $ h_0 \in H_{x_{1}}. $ Since $ y \in H_{x_{2}} \setminus H_{x_{1}}, $ we must have, $ y \in x_{0}^{\perp} ~\text{and}~ \alpha_0 \neq 0. $ Therefore, according to the condition stated in the theorem, we have :
\[Tx_0 = x_0 \perp_{B} Ty = \alpha_0 x_0.\]
However, this clearly leads to a  contradiction as $ x_0 \in S_{\mathbb{X}} $ and $ \alpha_{0} \neq 0. $ This completes the proof of the theorem.
\end{proof}
Thus, we have the following characterization of smooth Banach spaces :

\begin{theorem}\label{theorem:characterization}
A Banach space $ \mathbb{X} $ is smooth if and only if for every $ T \in \mathbb{L}(\mathbb{X}) $ and for every $ x \in M_T, $ we have, $ T(x^{\perp}) \subseteq (Tx)^{\perp}. $
\end{theorem}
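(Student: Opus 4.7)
The plan is to recognize that Theorem 2.5 is essentially a repackaging of two results already established in the paper, so the proof should consist of a short ``combine and conclude'' argument rather than any new construction.

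For the forward implication, I would assume $\mathbb{X}$ is smooth and apply the last clause of Theorem 2.2 with $\mathbb{Y} = \mathbb{X}$. Since both domain and codomain are then smooth, the hypothesis ``$\mathbb{X}$ and $\mathbb{Y}$ smooth'' of that theorem is automatically satisfied, and we obtain directly that for every $T \in \mathbb{L}(\mathbb{X})$ and every $x \in M_T$, $T(x^{\perp}) \subseteq (Tx)^{\perp}$. This requires no new work.

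For the backward implication, I would simply invoke Theorem 2.4, whose statement is literally the contrapositive/converse needed here: if the inclusion $T(x^{\perp}) \subseteq (Tx)^{\perp}$ holds for every $T \in \mathbb{L}(\mathbb{X})$ and every $x \in M_T$, then $\mathbb{X}$ must be smooth. The construction used in Theorem 2.4, namely taking a point of non-smoothness $x_0$ with two distinct supporting hyperplanes $H_{x_1}, H_{x_2}$ and defining the projection-type operator $T(\alpha x_0 + h) = \alpha x_0$, does all the heavy lifting already.

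Since both directions follow verbatim from previously proved results, there is essentially no main obstacle; the only thing to verify is that the two theorems do combine cleanly. In particular, I would note that Theorem 2.2 is stated for operators between possibly different Banach spaces $\mathbb{X}, \mathbb{Y}$ while Theorem 2.5 restricts to $\mathbb{L}(\mathbb{X})$, so I must just check that specializing $\mathbb{Y} = \mathbb{X}$ is permitted and that smoothness of $\mathbb{X}$ indeed yields smoothness of the codomain, which is immediate. Accordingly, the write-up can be a two-sentence proof that invokes Theorem 2.2 and Theorem 2.4 for the ``only if'' and ``if'' parts respectively.
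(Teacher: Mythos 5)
Your proposal is correct and matches the paper's approach exactly: the paper states this theorem immediately after Theorem 2.4 with no separate proof, precisely because the ``only if'' direction is the last clause of Theorem 2.2 specialized to $\mathbb{Y}=\mathbb{X}$ and the ``if'' direction is Theorem 2.4.
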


As another potential application of Theorem $ 2.2, $ let us now explore the possible norm attainment set of a linear operator $ T $ on $ \mathbb{X} = l_{p}^{2} (p \in \mathbb{N}\setminus \{ 1 \})$ spaces. If $ T $ is a scalar multiple of an isometry, then evidently $ M_T = S_{\mathbb{X}}. $ So let us restrict our attention to linear operators that are not scalar multiples of an isometry. It seems natural to ask what we can say about $ | M_T |, $ in that case. We once again apply Theorem $ 2.2 $ to answer this question. 

\begin{theorem}\label{theorem:p-spaces}
Let $ \mathbb{X} = l_{p}^{2} (p \in \mathbb{N}\setminus \{ 1 \}) $ and let $ T \in \mathbb{L}(\mathbb{X}) $ be such that $ T $ is not a scalar multiple of an isometry. Then $ | M_T | \leq 2(8p - 5). $
\end{theorem}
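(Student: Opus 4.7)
The proof proceeds by translating the norm-attainment condition, via Theorem 2.3, into a piecewise-polynomial equation on $S_{l_{p}^{2}}$, and then counting its real zeros by a sign-region analysis. Since $l_{p}^{2}$ is smooth for every $p \in \mathbb{N}\setminus\{1\}$, Theorem 2.3 applies: for any $x \in M_T$ and $y \in l_{p}^{2}$, $x \perp_{B} y \Leftrightarrow Tx \perp_{B} Ty$. Writing $\phi(u) := |u|^{p-1}\mathrm{sgn}(u)$, the Birkhoff-James relation in $l_{p}^{2}$ reads $x \perp_{B} y \Leftrightarrow \phi(x_1) y_1 + \phi(x_2) y_2 = 0$, so $x^{\perp}$ is spanned by $y_0 := (-\phi(x_2), \phi(x_1))$. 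If $T$ has matrix $\begin{pmatrix} a & b \\ c & d \end{pmatrix}$, then $Tx \perp_{B} Ty_0$ becomes
\[
F(x_1,x_2) := \phi(ax_1+bx_2)\bigl[b\phi(x_1)-a\phi(x_2)\bigr] + \phi(cx_1+dx_2)\bigl[d\phi(x_1)-c\phi(x_2)\bigr] = 0,
\]
so that $M_T \subseteq \{x \in S_{l_{p}^{2}} : F(x) = 0\}$.

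The function $F$ is continuous and positively homogeneous of degree $2(p-1)$; on each sign region of the four linear forms $x_1, x_2, ax_1+bx_2, cx_1+dx_2$ it coincides with a homogeneous polynomial in $(x_1,x_2)$ of degree at most $2p-2$, which is not identically zero (otherwise comparing coefficients would force $\|Tx\|_{p} = \|T\|\|x\|_{p}$ for all $x$, contradicting the hypothesis on $T$). Using the antipodal symmetry $-M_T = M_T$, it suffices to count antipodal pairs. Parameterize the open upper half $\{x_2 > 0\}$ by $t := x_1/x_2 \in \mathbb{R}$; the signs of $x_1, ax_1+bx_2, cx_1+dx_2$ change at most at $t = 0, -b/a, -d/c$, giving at most four open intervals in $\mathbb{R}$. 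On each interval, $F(t,1)$ is a nonzero polynomial of degree at most $2p-2$, contributing at most $2p-2$ real zeros. The finite dividing points contribute at most three more zeros of $F$, and the antipodal pair $\{(1,0),(-1,0)\}$ at infinity may or may not lie in $M_T$.

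The decisive observation is that $F(1,0) = b\phi(a) + d\phi(c)$ is precisely the leading coefficient of $F(t,1)$ in each of the two unbounded intervals. Hence if $\{(1,0),(-1,0)\} \subseteq M_T$, the two outermost polynomials have degree at most $2p-3$ and each loses one potential real zero. A two-case analysis closes the argument. Case A, the axis pair is not in $M_T$: the four polynomials contribute at most $4(2p-2) = 8p-8$ open-interval zeros, together with at most three finite dividing-point zeros, giving at most $8p-5$ antipodal pairs. Case B, the axis pair is in $M_T$: the outermost two polynomials contribute at most $2(2p-3) = 4p-6$, the inner two at most $2(2p-2) = 4p-4$, augmented by the three finite dividing points and the axis pair itself, giving at most $8p-6$ pairs. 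In either case, the number of antipodal pairs is at most $8p-5$, so $|M_T| \leq 2(8p-5)$.

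The main technical point is the compensation at infinity: recognizing $F(1,0)$ as the leading coefficient of the unbounded polynomials turns a naive count of $2(8p-4)$ into the sharper bound $2(8p-5)$. A secondary obstacle is verifying that no sign-region polynomial vanishes identically, which relies on the hypothesis that $T$ is not a scalar multiple of an isometry.
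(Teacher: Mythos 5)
Your proof is correct and rests on the same engine as the paper's: apply Theorem 2.2/2.3 at a norm-attaining point, write out Birkhoff--James orthogonality in $l_p^2$ explicitly, and reduce the membership condition for $M_T$ to real-root counting for polynomials of degree at most $2p-2$. The bookkeeping, however, is organized differently. The paper fixes a point $(x,y)\in M_T$ outside six excluded special points, branches into four cases according to the sign ambiguities in the orthogonality relations $(x,y)\perp_B(1,\pm x^{p-1}/y^{p-1})$ and $T(x,y)\perp_B(z,lz)$, and obtains one degree-$(2p-2)$ polynomial equation in $k=y/x$ per case, yielding $8(2p-2)+6=2(8p-5)$. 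You instead package everything into the single even, positively homogeneous function $F$ built from $\phi(u)=|u|^{p-1}\mathrm{sgn}(u)$, so that $M_T\subseteq\{F=0\}$, and your four sign regions in the variable $t=x_1/x_2$ play exactly the role of the paper's four cases; your three dividing points and the antipodal pair at infinity replace the paper's six excluded points, and the arithmetic $2\bigl[4(2p-2)+3\bigr]$ is literally the same number. Two features of your write-up go beyond the paper. First, the observation that $F(1,0)=b\phi(a)+d\phi(c)$ is the top coefficient of $F(t,1)$ on the unbounded intervals, so that $\pm(1,0)\in M_T$ costs a degree on each outer polynomial, is needed in your accounting (otherwise Case B gives $8p-4$ pairs) and is correctly verified. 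Second, you explicitly address whether a sign-region polynomial can vanish identically --- a point the paper silently assumes when it asserts "at most $2p-2$ solutions for $k$." Your justification ("comparing coefficients forces $\|Tx\|_p=\|T\|\,\|x\|_p$ for all $x$") is the right idea but is stated too quickly: $F\equiv 0$ on a region only says $\|Tx\|_p$ is constant on that arc, and one must separately note that this constant is harmless unless it equals $\|T\|$, in which case a polynomial identity between $\|Tx\|_p^p$ and $\|T\|^p\|x\|_p^p$ on an open cone, together with unique factorization of the degree-$p$ forms into linear factors over $\mathbb{C}$, forces $T$ to be a scalar multiple of a signed permutation, hence of an isometry. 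That gap is fillable, and since the paper has the same unaddressed issue, your version is, if anything, the more careful of the two.
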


\begin{proof}
Without loss of generality, we may and do assume that $ \| T \| = 1. $ We first note that if $ | M_T | \leq 6 $ then we have nothing to prove, since $ \| M_T \| \leq 6 < 2(8p-5) $ for each $ p \in \mathbb{N} \setminus \{ 1 \}. $ Let $ | M_T | > 6. $ Since $ \mathbb{X} $ is a two dimensional smooth and strictly convex Banach space, it follows from Corollary $ 2.2.1 $ that $ T $ must be invertible. In particular, $ T(x_1, y_1) = T(x_2, y_2) = \pm (1, 0) $ implies that $ (x_1, y_1) = \pm (x_2, y_2). $ Since $ | M_T | > 6, $ we can choose $ (x, y) \in M_T $ such that $ (x, y) \neq \pm (1, 0), \pm (0, 1) $ and $ T(x, y) \neq \pm (1, 0). $\\
Let $ T =
 	\begin{pmatrix}
  		a & b \\ 
  		\rule{0em}{3ex}c & \hphantom{-} d
 	\end{pmatrix} $ be the matrix representation of $ T $ with respect to the standard ordered basis of $ l_{p}^
	{2}. $ Since $ (x, y) \neq \pm (0, 1), $ there exists $ k \in \mathbb{R} $ such that $ y = kx. $ Since $ (x, y) \neq \pm (1, 0), $ we have, $ k \neq 0. $ Using elementary calculus, it is easy to check that either $ (x, y) \perp_B (1, -\frac{x^{p-1}}{y^{p-1}}) $ or $ (x, y) \perp_B (1, \frac{(-x)^{p-1}}{y^{p-1}}). $ Let us first assume that $ (x, y) \perp_B (1, -\frac{x^{p-1}}{y^{p-1}}). $ Since $ (x, y) \in M_T, $ applying Theorem $ 2.2, $ we have, 
\[ T(x, y) \perp_B T(1, -\frac{x^{p-1}}{y^{p-1}}) ~\text{i.e.,}~(ax+by, cx+dy) \perp_B (a-b\frac{x^{p-1}}{y^{p-1}}, c-d\frac{x^{p-1}}{y^{p-1}}). \]

We note that $ (ax+by, cx+dy) = T(x, y)  \neq \pm (1, 0). $ Therefore, if $ (ax+by, cx+dy) \perp_B (z, lz), $ where $ z, l \in \mathbb{R}, $ then either $ l=-\frac{(ax+by)^{p-1}}{(cx+dy)^{p-1}} $ or $ l=\frac{(-(ax+by))^{p-1}}{(cx+dy)^{p-1}}. $ Let us first assume that $ l=-\frac{(ax+by)^{p-1}}{(cx+dy)^{p-1}}. $ \\

We claim that $ T(1, -\frac{x^{p-1}}{y^{p-1}}) \neq (0, \gamma) $ for any $ \gamma \in \mathbb{R}. $ Suppose, on the contrary, $ T(1, -\frac{x^{p-1}}{y^{p-1}}) = (0, \gamma) $ for some $ \gamma \in \mathbb{R}. $ Since $ (x, y) \perp_{B} (1, -\frac{x^{p-1}}{y^{p-1}}) $ and $ (x, y) \in M_T, $ it follows from Theorem $ 2.2 $ that $ T(x, y) \perp_{B} T(1, -\frac{x^{p-1}}{y^{p-1}}), $ i.e., $ T(x, y) \perp_{B} (0, \gamma). $ Since $ \| T \| = 1 $ and $ (x, y) \in M_T, $ we must have $ T(x, y) = \pm (1, 0), $ a contradiction to our initial choice of $ (x, y). $ This proves our claim.\\
Therefore, it is legitimate to write the following equation : 
\[ -\frac{(ax+by)^{p-1}}{(cx+dy)^{p-1}} = \frac{c-d\frac{x^{p-1}}{y^{p-1}}}{a-b\frac{x^{p-1}}{y^{p-1}}}. \]

Recalling that $ y=kx, $ where $ k \neq 0, $ the above equation reduces to : 
\[ \frac{(a+bk)^{p-1}}{(c+dk)^{p-1}} = \frac{-ck^{p-1}+d}{ak^{p-1}-b}. \] 
It is easy to see that the above equation can be written in the form of a polynomial equation in $ k, $ of degree at most $ 2p-2, $ with the coefficients coming from $ \mathbb{R}. $ Therefore, this equation can have at most $ 2p-2 $ number of different solutions for $ k. $ It is easy to see that each real value of $ k $ gives rise to exactly two points on the unit sphere $ S_{\mathbb{X}}, $ which are antipodal. It is at this point of the proof that we remember that we have chosen $ (x,y) \in M_T $ such that $ (x, y) \neq \pm (1, 0), \pm (0, 1), T(x, y) \neq \pm (1, 0) $ and have also ignored some possible cases along the course of the proof.  Therefore, taking all possible cases into consideration, we must have :
\[ | M_T | \leq 8(2p-2)+6=2(8p-5). \] 

\end{proof}

\begin{remark}
We do not expect the upper bound $ 2(8p-5) $ obtained in Theorem $ 2.7 $ to be an optimal upper bound for $ | M_T |. $ In fact, it seems to us that using stronger geometric and algebraic techniques, it might be possible to improve upon the bound $ 2(8p-5). $ In view of this, we pose the following open question : 
\end{remark}

\noindent \textbf{Open Question :} For the Banach space $ \mathbb{X} = l_{p}^{2}, $ find an optimal bound for $ |M_T|, $ where $ T \in \mathbb{L}(\mathbb{X}) $ is not a scalar multiple of an isometry.

\begin{remark}
The case $ p=2 $ deserves special mention, since in this case $ \mathbb{X} $ is the two dimensional Euclidean space. It follows from Theorem $ 2.2 $ of \cite{Sa} that if $ T \in \mathbb{L}(l_{2}^{2}) $ then either $ M_T = S_{\mathbb{X}} $  or $ M_T $ is a doubleton.  In the first case, $ T $ is a scalar multiple of an isometry and in the second case, $ T $ is smooth. 
\end{remark}

\begin{remark}
We would like to comment that in light of the results obtained in the present paper, finding a characterization of the norm attainment set of a bounded linear operator defined on a Banach space turns out to be a particularly significant and nontrivial question in the study of the geometry of Banach spaces. It is not difficult to obtain a sufficient condition for a bounded linear operator $ T $ on a smooth Banach space $ \mathbb{X} $ to attain norm at $ x \in S_{\mathbb{X}}. $ In fact, $ Tx=x, T|_{x^{\perp}} = \alpha I, $ for some $ \alpha \in (0, 1), $ is one such condition. However, such a condition is certainly not necessary. It is perhaps befitting that we end the present paper with the following open question :
\end{remark}

\noindent \textbf{Open Question :} Let $ T $ be a bounded linear operator defined on a Banach space $ \mathbb{X}. $ Find a necessary and sufficient condition for $ x \in S_{\mathbb{X}} $ to be such that $ x \in M_T. $

\bibliographystyle{amsplain}

\end{document}